\documentclass[12pt]{amsart}

\usepackage{amsthm}
\usepackage{amsfonts}
\usepackage{graphicx}

\newcommand{\R}{{\mathbb{R}}}

\newtheorem{theorem}{Theorem}[section]

\newtheorem{lemma}[theorem]{Lemma}
\newtheorem{proposition}[theorem]{Proposition}
\newtheorem{conjecture}[theorem]{Conjecture}

\newtheorem{definition}[theorem]{Definition}

\addtolength{\oddsidemargin}{-.75in}
	\addtolength{\evensidemargin}{-.75in}
	\addtolength{\textwidth}{1.5in}

\begin{document}
\title{New examples of tunnel number subadditivity}
\author{Trent Schirmer}
\email{trenton-schirmer@uiowa.edu}

\begin{abstract}
If the tunnel number of a knot $K$ is denoted $t(K)$, a pair of knots $K_1,K_2$ is said to be subadditive if $t(K_1)+t(K_2)>t(K_1 \# K_2)$.  In \cite{schulschar} Scharlemann and Schultens defined the degeneration ratio to be $d(K_1,K_2)=1-\frac{t(K_1\# K_2)}{t(K_1)+t(K_2)}$, and proved that $d(K_1,K_2)\leq 3/5$.  However, the highest known degeneration ratio known for a pair of knots is just $2/5$.  We use free decompositions to construct links which experience degeneration approaching $3/7$ when the connect sum is taken with certain knots.  These links can be modified to yield a family of knots whose members we conjecture to have the same property.
\end{abstract}

\maketitle

\section{Introduction}

The focus of this paper is on the behavior of tunnel number under the operation of connect sum of knots and/or links, which has received substantial attention.  It is known that tunnel number can be additive $t(K_1\# K_2)=t(K_1)+t(K_2)$, subadditive $t(K_1 \# K_2)<t(K_1)+t(K_2)$, or superadditive $t(K_1\# K_2)>t(K_1)+t(K_2)$, but within limits.\\

For the case of superadditivity one has, at most, $t(K_1 \# K_2)=t(K_1)+t(K_2)+1$.  Morimoto, Sakuma, and Yokota \cite{mor2}, found pairs of tunnel number one knots whose connect sum has tunnel number $3$, and soon after Moriah and Rubinstein showed that superadditive pairs of knots exist of arbitrarily large tunnel number, as a corollary to the main result of their paper \cite{morrub}.  Later Rieck \cite{rieck} generalized the main result of \cite{morrub}, and later still Kobayashi and Rieck extended Moriah and Rubinstein's corollary on superadditive knot pairs \cite{kobri}, using it to provide a counterexample to an important conjecture of Morimoto's coming from \cite{mor4}.  Kobayashi and Rieck's result on superadditivity plays an important role in our construction as well; we state a weakened version of it in Theorem 4.4 below.\\

Morimoto found a subadditive pair of knots in \cite{mor3}, and Kobayashi soon after found that the quantity $t(K_1)+t(K_2)-t(K_1\#K_2)$ can be arbitrarily large \cite{kob}.  On the other hand, Scharlemann and Schultens showed that, for prime knots, the ``degeneration ratio'' $d(K_1,K_2)=1-\frac{t(K_1\# K_2)}{t(K_1)+t(K_2)}\leq \frac {3}{5}$ \cite{schulschar}.  A more complete account of this and many other problems related to tunnel number can be found in Moriah's survey \cite{ymor}.\\

More recent work of Morimoto in \cite{mor1} uses free decompositions to describe some properties that certain knot pairs could have that would allow them to achieve a degeneration ratio of $2/5$, and recently Joao Nogueira found knots satisfying just these properties in his dissertation \cite{nog}.  Below we generalize Morimoto's construction considerably and use it to describe a family of knots which we conjecture to asymptotically approach a degeneration ratio of $3/7$.  Included in this family are the knots of Nogueira (which were independently discovered by the author), and do achieve this property.  As a step in the construction we build links of arbitrarily large tunnel number which we prove do in fact achieve the expected degeneration ratios when the connect sum is taken with appropriate tuples of knots.  This in conjunction with Nogueira's result makes a positive answer to Conjecture 4.7 appear very likely.\\

\section{Notation, definitions, and some handlebody basics}

From now on $L$ shall be a link embedded in a compact orientable 3-manifold $M$ and, more specifically, $K$ shall be a knot. If $X$ is a polyhedron in the PL manifold $Y$, let $N(X)$ denote a closed regular neighborhood of $X$ in $Y$, and let $E(X)$ denote $\overline{Y\setminus N(X)}$.\\

\begin{definition}
An {\em unknotting system} for $L$ is a collection of arcs $T=\{t_1\cup \cdots t_n\}$ properly embedded in $E(L)$ such that $E(L\cup t_1 \cup \cdots t_n)$ is a handlebody.  The minimal cardinality of an unknotting system for $L$ shall be denoted $t(L)$, the {\em tunnel number} of $L$.\\
\end{definition}


\begin{definition}
If $V$ is a handlebody, an embedded graph $X$ in $V$ is said to be a {\em  spine} of $V$ if $E(X)$ is homeomorphic to $(\partial V)\times I$, where $I=[0,1]\subset \R$.  A subgraph $X'$ of a spine $X$ in $V$ will be called a {\em subspine} if it contains no contractible components, and in the special case that $X'$ is a collection of loops it will be called a {\em core} of $V$.\\
\end{definition}


\begin{definition}
A {\em compression body} is a handlebody or a thickened surface with 1-handles attached to one of its boundary components.  Equivalently it is a handlebody with an open regular neighborhood of a subspine removed.  Given a compression body $H=\overline{V\setminus N (X)}$, we define $\partial_+H=\partial V$ and $\partial_-H=\partial N(X)$.\\
\end{definition}

We allow this to include the case of the empty subspine and a full spine, thus handlebodies and thickened surfaces count as compression bodies.  \\


\begin{definition}

A {\em Heegaard splitting} of a compact orientable $3$-manifold $M$ is a decomposition $M=H_1\cup H_2$ where each $H_i$ is a compression body and $\partial_+ H_1=\partial_+ H_2=H_1\cap H_2$ is called a {\em Heegaard surface}.  The {\em Heegaard genus} $H(M)$ of a manifold is the minimal genus of a Heegaard surface for $M$.\\

\end{definition}


\begin{definition}
A collection of simple closed curves ${\bf C}=\{C_1, \cdots ,C_n\}$ embedded in $\partial V$ is called {\em primitive} if there exists a disjoint collection of compressing disks ${\bf D}=\{D_1, \cdots , D_n\}$ for $V$ such that $|C_i\cap D_j|=\delta_{ij}$ (where $\delta_{ij}$ is the Kronecker delta).  The disks ${\bf D}$ are said to be {\em dual} to ${\bf C}$. \\
\end{definition}


\begin{proposition}
A primitive collection ${\bf C}=\{C_1, \cdots , C_n\}$ of curves embedded in the boundary of a handlebody $V$ is isotopic to a core of $V$.\\
\end{proposition}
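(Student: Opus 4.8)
The plan is to induct on $n$, the number of curves. The base case $n=1$: we have a single curve $C_1 \subset \partial V$ and a compressing disk $D_1$ with $|C_1 \cap D_1| = 1$. Cut $V$ along $D_1$; since $C_1$ meets $D_1$ once, the curve $C_1$ survives as a single arc in the cut-open handlebody $V' = \overline{V \setminus N(D_1)}$, running between the two copies $D_1', D_1''$ of $D_1$ on $\partial V'$. The idea is to build an annulus or an ambient isotopy carrying $C_1$ onto a core loop: thicken $C_1$ to a band $B$ in $\partial V$ whose core is $C_1$, note that $B \cup N(D_1)$ is a solid torus neighborhood glued along $\partial V$, and that a co-core of this ``tube'' is exactly a core loop $\ell$ of a $\Z$-summand of $V$. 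One then checks $V$ deformation retracts onto $\ell \cup (\text{spine of } V')$, and $C_1$ is isotopic in this neighborhood to $\ell$. Equivalently, and perhaps cleaner: $N(C_1) \cup N(D_1)$ is a solid torus $W$ with $W \cap \partial V = N(C_1)$ an annulus, so $\overline{V \setminus W}$ is again a handlebody (of genus one less) and $W$ is an unknotted handle, whence $C_1$ is isotopic to the core of $W$, which is a core loop of $V$.

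For the inductive step, given the primitive collection ${\bf C} = \{C_1, \dots, C_n\}$ with dual disks ${\bf D} = \{D_1, \dots, D_n\}$, I would use $D_n$ and $C_n$ to peel off one handle: by the $n=1$ argument applied ``locally'' near $D_n$, the curve $C_n$ is isotopic to a core loop $\ell_n$ coming from the handle dual to $D_n$. Cut $V$ along $D_n$ to get a handlebody $V'$ of genus $g-1$. The point is that $C_1, \dots, C_{n-1}$ are disjoint from $D_n$ (since $|C_i \cap D_n| = \delta_{in}$), so they persist as curves in $\partial V'$, and $D_1, \dots, D_{n-1}$ can be isotoped off $D_n$ — here one must be slightly careful, since a priori $D_i$ could meet $D_n$; but the collection ${\bf D}$ is \emph{disjoint} by hypothesis, so $D_1, \dots, D_{n-1}$ descend to disjoint compressing disks of $V'$ still satisfying $|C_i \cap D_j| = \delta_{ij}$. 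Hence $\{C_1, \dots, C_{n-1}\}$ is primitive in $V'$, and by induction it is isotopic to a core of $V'$. Reassembling $V$ from $V'$ by reattaching the handle along $D_n$, the core of $V'$ together with $\ell_n$ forms a core of $V$, and the isotopies are supported away from each other (the isotopy of $\{C_1,\dots,C_{n-1}\}$ in $V'$ can be taken to miss a neighborhood of $D_n$, since cores of $V'$ can be pushed into the interior), so ${\bf C}$ is isotopic to a core of $V$.

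The main obstacle I anticipate is the $n=1$ geometric core: making rigorous that a simple closed curve meeting a compressing disk exactly once is isotopic to a core loop. The cleanest route is the ``unknotted handle'' observation above: $N(C_1 \cup D_1)$ is a solid torus meeting $\partial V$ in the single annulus $N(C_1)$, its complement in $V$ is a handlebody of genus $g-1$, and a handle attached to a handlebody along a disk-with-annulus pattern is standard, so its core is isotopic into a core of $V$ and carries $C_1$ with it. A secondary technical point, flagged above, is keeping the isotopies in the inductive step disjoint from the reattaching region — this is handled by the standard fact that a core (or subspine) of a handlebody can be isotoped into the interior, away from any prescribed disk in the boundary, combined with an innermost-disk/outermost-arc argument to make the pieces interact trivially. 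I do not expect either point to require a long computation; the content is all in the $n=1$ case and in bookkeeping the disjointness.
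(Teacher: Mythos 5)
Your proposal is built on the same underlying observation as the paper's proof: a regular neighborhood of $C_i\cup D_i$ is a solid torus containing $C_i$ as a core, attached to the rest of $V$ along a disk. The paper simply does all $n$ curves at once --- the frontier of $N(C_1\cup\cdots\cup C_n\cup D_1\cup\cdots\cup D_n)$ is a collection of compressing disks cutting $V$ into a handlebody $\tilde{V}$ and solid tori $T_i$ with cores isotopic to the $C_i$, so that $E({\bf C})$ is $\tilde{V}$ with thickened tori attached along disks, hence a compression body --- whereas you peel off one handle at a time by induction. Your inductive bookkeeping (the $D_i$ are disjoint by definition, so $D_1,\dots,D_{n-1}$ and $C_1,\dots,C_{n-1}$ descend to $V'$ and remain primitive there) is fine, if more laborious than necessary.

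There is, however, one concrete error in the ``cleaner'' version of your base case. The solid torus $W=N(C_1\cup D_1)$ meets $\partial V$ not in the annulus $N(C_1)$ but in a regular neighborhood of the two curves $C_1$ and $\partial D_1$, which intersect transversally in one point --- that is, in a once-punctured torus $P$. This is not a cosmetic point: if $W\cap\partial V$ really were an annulus, the frontier $\overline{\partial W\setminus\partial V}$ would be an annulus too, and gluing a solid torus onto a $3$-manifold along a boundary annulus does not automatically yield a handlebody, so your assertion that $\overline{V\setminus W}$ is a handlebody of genus one less would not follow as stated. With the correct count, $\overline{\partial W\setminus P}$ is the complement of a once-punctured torus in the torus $\partial W$, i.e.\ a single separating compressing disk $E$ of $V$; cutting along $E$ then immediately exhibits $\overline{V\setminus W}$ as a handlebody of genus $g-1$ and $W$ as an unknotted handle, and $C_1$, being a simple closed curve on $\partial W$ generating $\pi_1(W)$, is a longitude of the solid torus $W$ and hence isotopic to its core. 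This corrected computation is exactly the one the paper performs (for all $i$ simultaneously); once you make it, the rest of your argument goes through.
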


\begin{proof}
It suffices to show that ${\bf C}$ can be isotoped into $V$ so that $E({\bf C})$ becomes a compression body.  Let ${\bf D}=\{D_1, \cdots , D_n\}$ be a collection of disks dual to ${\bf C}$. Then ${\bf E}=\overline{\partial N(C_1\cup \cdots \cup C_n \cup D_1 \cup \cdots \cup D_n) \setminus \partial V}$ is a collection of compressing disks which cut $V$ into a handlebody $\tilde{V}$ and a collection of solid tori $T_i$, each having a core isotopic to exactly one element $C_i\in{\bf C}$.  Thus, we see that $E({\bf C})$ is obtained by attaching a collection of thickened tori $\overline{T_i\setminus N(C_i)}$ to a handlebody (or ball) $\tilde{V}$ along disks, and so must be a compression body.\\
\end{proof}


\begin{proposition}
If ${\bf C}$ is a primitive collection of simple closed curves on the boundary of a handlebody $V$, and $h:N({\bf C})\cap \partial V\rightarrow \partial W$ is an orientation reversing embedding into the boundary of a handlebody $W$, then $V\cup_hW$ is a handlebody.\\
\end{proposition}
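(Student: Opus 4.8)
The plan is to use the dual disks ${\bf D}=\{D_1,\dots,D_n\}$ to put $V$ into a normal form, and then to push the resulting $1$-handles off of $V$ and into $W$. Set $V_0=\overline{V\setminus N({\bf D})}$. Since the $D_i$ are disjoint, $V_0$ is a (possibly disconnected) handlebody, $V=V_0\cup\bigcup_i N(D_i)$, and each $N(D_i)\cong D^2\times[0,1]$ is a $1$-handle with cocore $D_i$, attached to $V_0$ along two disjoint disks $\delta_i^0,\delta_i^1\subset\partial V_0$. Because $|C_i\cap D_j|=\delta_{ij}$, after shrinking $N({\bf D})$ and isotoping ${\bf C}$ we may assume each $C_i$ meets $N(D_i)$ in a single band running once over the handle and meets no other $N(D_j)$. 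Then the annular neighborhood $A_i:=N(C_i)\cap\partial V$ splits as $A_i=R_i\cup b_i$, where $b_i:=A_i\cap\partial N(D_i)$ is a disk in the boundary sphere $\partial N(D_i)$ and $R_i:=A_i\cap\partial V_0$ is a band joining $\delta_i^0$ to $\delta_i^1$; the $b_i$ are pairwise disjoint, and the sets $G_i:=R_i\cup\delta_i^0\cup\delta_i^1\subset\partial V_0$ are pairwise disjoint disks (each a band with a disk attached at either end).

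Now let $X=V\cup_h W$. Writing $X=V_0\cup\bigcup_i N(D_i)\cup W$ and recalling that $W$ is attached along $\bigcup_i A_i$, observe that in $X$ the handle $N(D_i)$ meets $W$ exactly in the disk $b_i\subset\partial N(D_i)$. Hence we may form $W':=W\cup\bigcup_i N(D_i)$, obtained from $W$ by attaching the $3$-balls $N(D_i)$ along the disks $b_i$ in their boundary spheres; since attaching a $3$-ball along a disk in its boundary does not change the homeomorphism type (the ball is a collar on that disk), $W'\cong W$ is a handlebody, and now $\delta_i^0,\delta_i^1\subset\partial W'$. With these identifications $X=V_0\cup W'$, the gluing taking place precisely along the disjoint disks $G_i\subset\partial V_0$ and their copies in $\partial W'$. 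Attaching $W'$ to $V_0$ one disk at a time then exhibits $X$ as built from the disjoint union of handlebodies $V_0\sqcup W'$ by a succession of boundary connected sums (when $G_i$ and its copy lie in distinct components) and $1$-handle attachments (when they lie in the same component); each move preserves being a disjoint union of orientable handlebodies --- this last point is where the hypothesis that $h$ reverses orientation is used --- and $X$ is connected, being the union of the connected sets $V$ and $W$ along the nonempty set $\bigcup_i A_i$. Therefore $X$ is a handlebody.

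The ``absorb a $3$-ball along a boundary disk'' step and the boundary-connected-sum/$1$-handle bookkeeping are routine. The one place calling for care --- and the step I expect to be the genuine obstacle --- is the normal form in the first paragraph: one must take $N({\bf D})$ thin and isotope ${\bf C}$ so that $C_i$ meets only the handle $N(D_i)$, in a single essential band, with no superfluous turn-around sub-arcs. This is exactly where one uses that $|C_i\cap D_j|$ is the minimal geometric intersection number $\delta_{ij}$, via a standard no-bigons argument. Once this picture is secured, the identification $A_i=R_i\cup b_i$ and the fact that each $G_i$ is a disk carried to a disk in $\partial W'$ are immediate.
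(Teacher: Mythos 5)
Your argument is correct, and it is a close cousin of the paper's proof, but the decomposition is genuinely different. The paper sets $H=N({\bf C}\cup{\bf D})$, asserts in one line that $H\cup_h W\cong W$ (this is exactly the "gluing along the annular neighborhood of a primitive curve is trivial" fact), and then attaches the handlebody $E({\bf C}\cup{\bf D})$ along the frontier disks of $H$. You instead split $V$ as $V_0\cup\bigcup_i N(D_i)$ with $V_0=\overline{V\setminus N({\bf D})}$, absorb only the balls $N(D_i)$ into $W$ along the single disks $b_i$, and then observe that the remaining gluing region $G_i=R_i\cup\delta_i^0\cup\delta_i^1$ on $\partial V_0$ is a disk, reducing everything to boundary connected sums and $1$-handle attachments. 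What your route buys is that the only nontrivial absorption step is "a $3$-ball glued along one boundary disk is a collar," which is more elementary than the annulus-gluing fact the paper invokes; in effect you have unpacked the paper's claim that $H\cup_h W\cong W$ into explicit handle moves. You also correctly locate where orientation-reversal of $h$ is needed (so that the $1$-handle attachments stay orientable). One small remark: the step you flag as the genuine obstacle --- putting ${\bf C}$ in normal form relative to $N({\bf D})$ via a no-bigons argument --- is actually immediate here, because Definition 2.5 takes $|C_i\cap D_j|=\delta_{ij}$ to be the literal geometric intersection count, not a minimal intersection number; so $C_i$ already meets $D_i$ in one point and misses the other $D_j$, and only a shrinking of neighborhoods is required.
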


\begin{proof}
Let $\bf{D}$ be the collection of disks dual to ${\bf C}$.  Then if $H=N({\bf C} \cup {\bf D})$, $W'=H\cup_h W$ is homeomorphic to $W$ and thus is a handlebody. But $V\cup_hW$ is obtained from $W'$ by attaching the handlebodies $E({\bf C} \cup {\bf D})$ to $W'$ along disks, and so is also a handlebody.\\
\end{proof}

\section{Free decompositions}

Free tangle decompositions were introduced into the literature by Kobayashi \cite{kob} in order to prove the existence of knot pairs whose tunnel number experiences arbitrarily large degeneration under connect sum.  In this section we define a slighly more general notion of free decompositions and prove some useful propositions about them.  

\begin{definition}
Let $M$ be a compact orientable $3$ manifold with non-empty boundary, and let $T=\{t_1,\cdots , t_n\}$ be a collection of arcs properly embedded in $M$.  The pair $(M,T)$ is called a tangle in $M$, and it is {\em free} if $E(T)$ is a handlebody.\\
\end{definition}

Observe that an unknotting system for $L$ is a free tangle in $E(L)$.  It is useful to specialize Definition 3.2 as follows:\\


\begin{definition}
A tangle $(M,T)$ is said to be {\em trivial} if it satisfies the following conditions:

\begin{itemize}
\item $(M,T)$ is free
\item For every $t\in T$, there is a disk $D$ embedded in $M$ such that $\partial D=t\cup\alpha$, where $\alpha=D \cap \partial M$ is an arc with $\alpha \cap t =\partial t =\partial \alpha$, and $int(D)\cap (t_1\cup \cdots \cup t_n)=\emptyset$.

\end{itemize} 

\end{definition}

The collection ${\bf D}$ of disks associated with a trivial tangle $T$ above can always be chosen disjoint.  In the next definition, the condition that $S$ be strongly separating means that each component of $E(S)$ can be labeled with a $+$ or $-$ in such a way that no pair of adjacent components share a common sign.\\


\begin{definition}
Let $S$ be a strongly separating closed surface in a closed orientable $3$-manifold $M$, and let $L$ be a link in $M$ transverse to $S$.  If, for each component $V_i$ of $E(S)$, the tangle $(V_i,L\cap V)=(V_i,T_i)$ is free, $S$ is said to be a {\em free decomposing surface} for $L$, and $(S,(V_1,T_1),\cdots , (V_n,T_n))$ is a {\em free decomposition} of $L$.
\end{definition}


\begin{definition}
A free decomposing surface $F$ for $L\subset M$ is called an $|L\cap F|/2$-{\em bridge surface} if $(V,L\cap V)$ is a trivial tangle for each component $V$ of $E(F)$. This forces $F$ to be connected Heegaard surface of $M$.  The minimal $n$ for which a genus $g$, $n$-bridge surface of $L$ exists is called the {\em genus $g$ bridge number} of $L$, denoted $b(g,L)$.  In the case $M=S^3$ we write $b(0,L)=b(L)$, which is called simply the {\em bridge number} of $L$.\\
\end{definition}

The following somewhat technical lemma is of central importance to the work that follows; it can be viewed as an extension of Proposition 3.5 of Kobayashi's paper \cite{kob} to our more general setting:\\


\begin{lemma}
Suppose $L$ is a link in a closed orientable $3$-manifold $M$ with components $L_1,\cdots ,L_n$ and a free decomposition $(S,(V_1,T_1), \cdots , (V_k,T_k))$, and that for each $L_i$ there is an arc $y_i$ in $L_i\cap V_j$ for some $j$ which cobounds a disk with an arc on $\partial V_j$.  Let $\{K_i\}_{i=1}^n$ be a collection of knots in $M_i$ with bridge surfaces $\{F_i\}_{i=1}^n$ satisfying $|K_i\cap F_i|=|L_i\cap S|$ for each $1\leq i\leq n$.  Then the connect sum $L'\subset M'=M\# M_1 \# \cdots \# M_n$ of $L$ with the collection $\{K_i\}_{i=1}^n$ (so that each $K_i$ connects along the component $L_i$) can be taken in such a way that the resulting glued surface $\overline{S\setminus N(L)}\cup \overline{F_1\setminus N(K_1)} \cup \cdots \cup \overline{F_n\setminus N(K_n)}=S'$ becomes a closed connected Heegaard surface of $E(L')$.\\

\end{lemma}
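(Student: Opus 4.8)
The plan is to build the connect sum $L'$ handle-by-handle and to track what happens to the exterior $E(L')$ and the glued surface $S'$ simultaneously. The starting observation is that $\overline{S\setminus N(L)}$ already splits $E(L)$ into the two handlebodies $E(T_1)\sqcup\cdots$ on the $+$ side and the $-$ side of the free decomposition (using that $S$ is strongly separating and each $(V_j,T_j)$ is free), while each $\overline{F_i\setminus N(K_i)}$ splits $E(K_i)$ into the two handlebodies coming from the trivial tangles of the bridge surface $F_i$. The arc $y_i\subset L_i\cap V_j$ that cobounds a disk with an arc on $\partial V_j$ is precisely the feature that lets us perform the connect sum along $L_i$ at that arc: removing $N(y_i)$ from $V_j$ and gluing in a copy of $E(K_i)\cap(\text{a collar})$ along the bridge decomposition, the disk cobounded by $y_i$ becomes a product region that realizes the connect-sum sphere transverse to the new surface. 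So I would first fix, for each $i$, the location of the connect sum to be along $y_i$, and describe $M'=M\#M_1\#\cdots\#M_n$ as obtained by these local cut-and-paste operations.

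Next I would verify that $S'$ is a Heegaard surface for $E(L')$, i.e.\ that $E(L')$ is cut by $S'$ into two compression bodies. The point is that the connect-sum operation, when performed along $y_i$, glues a component $V_j$ of $E(S)$ to one of the two handlebodies $E(K_i)$-pieces coming from $F_i$ along a disk (the disk $N(y_i)\cap\partial V_j$, suitably interpreted after drilling), and a handlebody glued to a handlebody along a disk in their boundaries is again a handlebody. The $+$ side of $S'$ is thus the boundary-connect-sum of the $+$-side handlebodies of $E(L)$ with the appropriate handlebodies from the $F_i$, hence a handlebody; similarly for the $-$ side. This is essentially the same bookkeeping as in Propositions 2.6 and 2.7, except that here we are gluing handlebodies along disks rather than identifying primitive curves — so I would invoke the elementary fact (or reprove it in two lines) that boundary-connect-sum preserves the handlebody property, and that the gluing disks can be chosen disjoint because the disks $\mathbf D$ associated to a trivial tangle can be chosen disjoint (remark after Definition 3.3) and the disk cobounded by $y_i$ can be isotoped off them.

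Then I would check connectedness of $S'$: the gluing annuli $\partial N(L_i)\cap S$ and $\partial N(K_i)\cap F_i$ are identified in pairs when we form the connect sum along $L_i$, so each $F_i$ gets glued to $S$ (which is connected, being a Heegaard surface by Definition 3.6 since at least one of its tangles — indeed all, in the hypotheses we care about — is trivial along $y_i$; more carefully, connectedness of $S$ follows because $L$ is a link in a closed $M$ and $S$ is strongly separating, but if $S$ were disconnected the argument still goes through after a preliminary remark), and each $F_i$ is connected being a bridge surface. A union of connected surfaces glued along nonempty families of circles is connected, giving connectedness of $S'$. Finally the genus/Euler characteristic bookkeeping: $\chi(S')=\chi(S)-|L\cap S|+\sum_i\big(\chi(F_i)-|K_i\cap F_i|\big)$ and this matches the Euler characteristic of a splitting surface for $E(L')$ computed from the two sides, confirming it is a genuine Heegaard surface and not merely a splitting with extra sphere components.

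The main obstacle I expect is \emph{choosing the connect sum consistently}, i.e.\ arranging all $n$ local modifications simultaneously so that the resulting surface is the \emph{stated} surface $\overline{S\setminus N(L)}\cup\overline{F_1\setminus N(K_1)}\cup\cdots$ and not some stabilization of it. Concretely, one must make sure that after drilling $N(y_i)$ the piece of $K_i$ glued in really is a \emph{trivial} bridge arc relative to the disk cobounded by $y_i$, so that no unwanted handles are created and each side stays a handlebody rather than a nontrivial compression body. This is where the trivial-tangle hypothesis on the $F_i$ and the cobounding-disk hypothesis on the $y_i$ must be used in tandem; once those local pictures are pinned down, the global assembly via iterated boundary-connect-sum is routine.
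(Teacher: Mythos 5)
Your outline (show each side of $S'$ is a handlebody, check connectivity, conclude) has the same shape as the paper's argument, but the proposal is missing the construction that makes the lemma true, and the handlebody verification as you describe it would fail. The paper does not perform the connect sum as a local modification near $y_i$: it identifies the sphere $R_i^1=\partial N(K_i\setminus w_i)$ (where $w_i$ is a single bridge arc of $K_i$ in one side of $F_i$) with the sphere $R_i^2=\partial N(L_i\setminus y_i)$ via an orientation-reversing map carrying $F_i\cap R_i^1$ to $S\cap R_i^2$. These spheres are tubes running along the whole of $K_i\setminus w_i$ and $L_i\setminus y_i$, so the identification distributes $\overline{F_i\setminus N(K_i)}$ along the length of $L_i$ and matches its $|K_i\cap F_i|$ boundary circles one-for-one with the $|L_i\cap S|$ circles of $S\cap\partial N(L_i)$, which are spread over several components $V_j$. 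This is the only point where the hypothesis $|K_i\cap F_i|=|L_i\cap S|$ enters, and it is exactly what produces the stated surface $S'$. A small connect-sum sphere contained in $V_j$ near $y_i$ leaves $F_i$ sitting inside a little ball and gives no mechanism at all for gluing its many boundary circles to the circles of $S\cap\partial N(L_i)$; you flag this as ``the main obstacle \ldots routine once pinned down,'' but it is the central idea of the proof rather than an assembly detail.

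Second, the claim that each side of $S'$ is a handlebody because it is an iterated boundary-connect-sum (gluing along disks only) is not correct. Under the tube identification, the pieces $\tilde W_i^1=\overline{W_i^1\setminus N(K_i\setminus w_i)}$, $\tilde W_i^2$, and $\tilde V_j=\overline{V_j\setminus N(L\setminus\{y_1\cup\cdots\cup y_n\})}$ are glued along the \emph{annuli} and disks of $R_i^1\setminus F_i$, and gluing handlebodies along annuli does not in general yield a handlebody: one must check that the cores of these annuli form primitive collections on the relevant boundaries and invoke Proposition 2.7; the disks then contribute only $1$-handles. Moreover, after establishing that $S'$ is a Heegaard surface of $M'$, one still has to show it is a Heegaard surface of $E(L')$; the paper does this by observing that each $L_i'=y_i\cup w_i$ is isotopic into $S'$ as a primitive curve, with dual disk a disk of $R_i^1\setminus F_i$, and applying Proposition 2.6. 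Your Euler-characteristic computation at the end does not substitute for either of these steps. (Also, your parenthetical claim that $S$ is connected is false in the intended applications, where $S$ is a union of nested spheres; the paper's connectivity argument instead shows $L\cup S$ is connected because each $\overline{V_j\setminus N(L)}$ is a handlebody and hence has connected boundary.)
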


\begin{proof}
For each $K_i$, let $E(F_i)=W_i^1\cup W_i^2$, and choose any component $w_i$ of $K_i\cap W_i^1$.  Let $R_i^1$ be the sphere $\partial N(K_i\setminus w_i)$, and $R_i^2$ be the sphere $\partial N(L_i \setminus y_i)$.  Then the connect sum in question can be obtained by attaching $E(K_i)$ to $E(L_i)$ via an orientation reversing map $h_i: (R_i^1,F_i\cap R_i^1)\rightarrow (R_i^2,  S\cap R_i^2)$, as pictured in Figure 1.\\

\begin{figure}
\begin{center}
\includegraphics[width=0.8\textwidth]{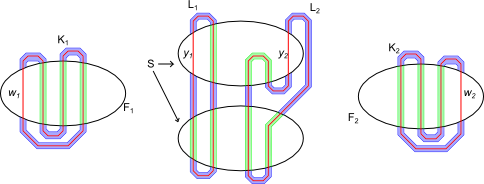}
\end{center}
\caption{Schematic diagram for the case $n=2$.  Knots and links are in red, free decomposing surfaces are in black, and the spheres $R_i^j$ are represented by light blue and green tubes running along the knots and links, according to the pattern of their identification via the maps $h_i$.}
\end{figure}

We assumed in Definition 3.3 that $S$ strongly separates $M$ into non-adjacent $+$ and $-$ components, without loss of generality suppose ${\bf V_+}=\{V_1,\cdots V_l  \}$ and ${\bf V_-}=\{V_{l+1},\cdots , V_k\}$ are the sets of $+$ and $-$ components, respectively.  If, for a given $i$, $y_i$ lies in a $+$ component of $E(S)$, it is not difficult to see that $h_i(R_i^1\cap W_i^1)=R_i^2\cap (V_1\cup \cdots \cup V_l)$, and  $h_i(R_i^1\cap W_i^2)=R_i^2\cap (V_{l+1}\cup \cdots \cup V_k)$.  Similarly, if $y_i$ lies in a $-$ component, then $h$ glues $W_i^1$ only to $-$ components, and $W_i^2$ only to $+$ components.  Thus, once the gluings are performed for each $i$, $1\leq i \leq n$, the complementary components of the resulting surface $S'$ can be labeled with a $+$ or $-$ consistently with the original labeling of the components $V_i$, so $S'$ is strongly separating in $M'$ as required.\\

Note that $L\cup S$ is necessarily connected since $\overline{V_j\setminus N(L)}$ is a handlebody for each $j$ and thus must have a single boundary component.  Moreover, if a component $S_i$ is connected to a component $S_j$ of $S$ by $L_k$, then $h_i$ glues $F_k$, which is connected, to both $S_j$ and $S_i$, making them both part of the same component in $S'$.  But then it follows that $S'$ is connected and, by the previous paragraph, separating.  Thus $E(S')$ has two components $Y_1, Y_2$; we now will show that each of them is a handlebody.\\

Recall that by definition each of the pieces $\overline{W_i^1\setminus N (K_i)}$, $\overline{W_i^2\setminus N (K_i)}$, and $\overline{V_j\setminus N(L)}$ is a handlebody.  Moreover since the arcs $y_i$ and $w_i$ cobound disks on the boundaries of those pieces in which they are embedded, the pieces $\tilde{W}_i^1=\overline {W_i^1\setminus N(K_i\setminus w_i)}$ and $\tilde{V}_j=\overline{V_j\setminus N (L\setminus \{y_1\cup\cdots \cup y_n\})}$ are all handlebodies as well.\\

Each of $Y_1$ and $Y_2$ is obtained by gluing some subcollection of the handlebodies $\tilde{W}_i^1$, $\tilde{W}_i^2=\overline{W_i^2\setminus N (K_i)}$ to some subcollection of the handlebodies $\tilde{V}_j$ along some subcollection of the annuli and disks appearing in $R_i^1\setminus F_i$.  The core curves of the annuli in  $R_i^1\setminus F_i$ form primitive collections of simple closed curves on $\partial \tilde{W_i^1}$ and $\partial \tilde{W_i^2}$, so by Proposition 2.7 the result of gluing $\tilde{W_i^1}$ and $\tilde{W_i^2}$ to the $\tilde{V}_j$ along these annuli is a collection of handlebodies.  Since each of $Y_1$ and $Y_2$ is then obtained from this by identifying disks on $\partial \tilde{W_i^1}$ and $\partial \tilde{W_i^2}$ to disks on $\partial \tilde{V}_j$, i.e. by attaching 1-handles, it follows that they too are handlebodies.\\

Thus $S'$ is a Heegaard surface for $M'$, so by Proposition 2.6 it is enough to show that $L'$ can be isotoped onto a primitive collection of simple closed curves on $S'$.  But each component $L_i'$ of $L'$ is just the union $y_i \cup w_i$.  By our hypotheses on these subarcs each $L_i'$ can thus be isotoped onto $S'$, and indeed the closure of either disk in $R_i^1\setminus F_i$ will serve as a dual compressing disk for $L_i'$.\\
  
\end{proof}

Free decompositions of links place upper bounds on their tunnel number, as was shown by Morimoto in the case when the decomposing surface is a single sphere in \cite{mor1}.  His methods do not extend to our more general case.  However a bound does exist in terms of arbitrary free decomposing surfaces.\\


\begin{proposition}
Let $L\subset M$ be a link with a free decomposing surface $S$.  Then $H(E(L))\leq 1+|L\cap S|-\frac{\chi(S)}{2}$.\\
\end{proposition}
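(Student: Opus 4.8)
The plan is to reduce the statement to the construction of a small unknotting system for $L$, and then to build one from the free decomposition using Proposition 2.7. Recall first that any unknotting system $T$ for $L$ produces a genus $1+|T|$ Heegaard splitting of $E(L)$: one side is the handlebody $E(L\cup T)$, the other is a compression body with negative boundary $\partial E(L)$ --- a collar of $\partial E(L)$ with $|T|$ one-handles attached (the cocores being the arcs of $T$) --- and the two meet along $\partial E(L\cup T)$; since $\chi(E(L\cup T))=\chi(E(L))-|T|=-|T|$, this handlebody has genus $1+|T|$. So it suffices to exhibit an unknotting system $T$ with $|T|=|L\cap S|-\chi(S)/2$.

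For the setup we use strong separation to two-color the components of $E(S)$, and write $E(L)=\bigcup_i H_i$, where $H_i=\overline{V_i\setminus N(L)}$ is a handlebody because $(V_i,T_i)$ is free. The $H_i$ are glued along $\Sigma=\overline{S\setminus N(L)}$, whose components $\Sigma^{(r)}$ correspond to the components $S^{(r)}$ of $S$; let $g_r$ be the genus of $S^{(r)}$ and $b_r=|L\cap S^{(r)}|$ the number of boundary circles of $\Sigma^{(r)}$, all of which are meridians of $L$ on $\partial E(L)$. Then $\sum_r b_r=|L\cap S|$, $\sum_r g_r=m-\chi(S)/2$ where $m$ is the number of components of $S$, and $\sum_i\chi(H_i)=\chi(\Sigma)=\chi(S)-|L\cap S|$; consequently $\sum_r(g_r+b_r-1)=|L\cap S|-\chi(S)/2$.

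Now, for each $S^{(r)}$ I would choose $g_r+b_r-1$ disjoint arcs properly embedded in $\Sigma^{(r)}$ with endpoints on $\partial\Sigma^{(r)}$ whose complement $\Sigma'_r$ in $\Sigma^{(r)}$ is a disk with $g_r$ holes, arranged so that the $g_r$ inner boundary circles of $\Sigma'_r$ form a primitive collection on the boundary of the handlebody $H$ on one chosen side of $S^{(r)}$; pushing each of these arcs slightly into $H$ yields a boundary-parallel arc properly embedded in $E(L)$ with endpoints on $\partial E(L)$. Let $T$ be the union of all of them, so $|T|=\sum_r(g_r+b_r-1)=|L\cap S|-\chi(S)/2$. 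After handle slides, drilling $T$ turns the decomposition $E(L)=\bigcup_i H_i$ glued along $\Sigma$ into a decomposition of $E(L\cup T)$ as the same handlebodies $H_i$ glued along the simpler surfaces $\Sigma'_r$; writing each $\Sigma'_r$ as a regular neighborhood of its $g_r$ primitive circles band-summed to a disk, iterated application of Proposition 2.7 together with the fact that gluing handlebodies along a disk is a boundary connect sum shows that $E(L\cup T)$ is a handlebody. (Consistently, $\chi(E(L\cup T))=\sum_i\chi(H_i)-\sum_r\chi(\Sigma'_r)=(\chi(S)-|L\cap S|)-\sum_r(1-g_r)=\chi(S)/2-|L\cap S|=-|T|$.) Hence $T$ is an unknotting system of the required size and $H(E(L))\le 1+|T|=1+|L\cap S|-\chi(S)/2$.

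The step I expect to be the main obstacle is the arrangement in the preceding paragraph: one must choose the cutting arcs so that the circles surviving each cut genuinely form a primitive collection in an adjacent handlebody --- and jointly so when several of the $S^{(r)}$ abut the same region --- in order that Proposition 2.7 can be applied throughout the assembly of $E(L\cup T)$. This is precisely where the handlebody hypothesis on each tangle $(V_i,T_i)$ and the two-coloring of $E(S)$ are used, and it is the analogue in this more general setting of the key step in Morimoto's treatment of the spherical case. One minor separate point: a component of $S$ disjoint from $L$ has closed $\Sigma^{(r)}$ and no associated arcs; the corresponding handlebody is absorbed directly by Proposition 2.7, without affecting the count.
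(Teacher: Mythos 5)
Your reduction of the statement to exhibiting an unknotting system of cardinality $|L\cap S|-\chi(S)/2$ is fine, and your Euler characteristic bookkeeping is correct. But the step you yourself flag as ``the main obstacle'' is a genuine gap, and it is exactly the hard part of the proposition: you need, for each component $S^{(r)}$ of $S$, a system of arcs in $\overline{S^{(r)}\setminus N(L)}$ whose residual boundary circles form a collection that is primitive in the adjacent handlebody $\overline{V_i\setminus N(L)}$ --- and jointly primitive (a single disjoint dual disk system) when several components of $S$ abut the same region. Nothing in the definition of a free decomposition supplies such curves: freeness only says $\overline{V_i\setminus N(L)}$ \emph{is} a handlebody, and there is no reason a primitive system for that handlebody can be found lying inside the prescribed subsurface $\overline{S^{(r)}\setminus N(L)}$ of its boundary while simultaneously cutting that subsurface down to a planar piece. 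This is precisely why the paper remarks, just before the proposition, that Morimoto's direct method for sphere decompositions ``does not extend to our more general case'': when every $S^{(r)}$ is a sphere one has $g_r=0$, no primitive circles are needed, and your argument reduces to iterated boundary connect sums; for positive genus the primitivity requirement is the whole difficulty, and your proof does not address it.

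The paper's proof is arranged specifically to dodge this. It adds small spheres to $S$ so that Lemma 3.5 applies, then connect-sums $L$ with unknots equipped with $|L_i\cap\tilde S|/2$-bridge spheres; this returns $L$ together with an amalgamated surface $S'$ that is an honest Heegaard surface of $E(L)$, whose genus is computed to be $1+|L\cap S|-\chi(S)/2$. In that argument Proposition 2.7 is applied with the primitive curves taken to be the longitudes of the boundary-parallel strands of the unknots' \emph{trivial} tangles --- where primitivity is immediate from the bridge disks --- and the handlebodies $\overline{V_j\setminus N(L)}$ coming from the free decomposition play only the role of the unconstrained side $W$ in Proposition 2.7. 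In other words, the paper shifts the primitivity burden onto auxiliary trivial tangles rather than onto curves chosen on $S$ itself. If you want to salvage your direct construction, you would need either to prove the existence of your cutting system (which I do not believe follows from the hypotheses as you have set them up) or to imitate this relocation of the primitive curves to the strands of $L$.
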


\begin{proof}
For each component $L_i$, pick a single small subarc $\gamma_i$ of some component of $L_i\setminus S$, and add the spheres $S_i=\partial N (\gamma_i))$ to $S$ to obtain a new free decomposing surface $\tilde{S}$ satisfying the hypothesis of Lemma 3.5.  Corresponding to each $L_i$, let $K_i$ be the unknot in $S^3$ together with an $|L_i\cap \tilde{S}|/2$ bridge sphere $F_i$.\\

Taking the connect sum of $L$ with the $K_i$ as in Lemma 3.5 yields back $L$ again, together with a Heegaard surface for $S'$ for $E(L)$.  We now compute

\begin{center}
$\chi (S')= \chi (\tilde{S}\setminus L)+\displaystyle\sum\limits_i \chi (F_i\setminus K_i)$
\end{center}

and since

\begin{center}
$\chi(\tilde{S}\setminus L)=\chi(S\setminus L)$, $\chi(F_i\setminus K_i)=2-|K_i\cap F_i|=-|L_i\cap S|$

\end{center}

we obtain

\begin{center}
$g(S')=1-\frac{\chi (S')}{2}=1-\frac{\chi(S\setminus L)-|L\cap S|}{2}$
\end{center}

and, since $\chi(S\setminus L)=\chi(S)-|L\cap S|$, we deduce the desired inequality.\\

\end{proof}

The small spheres added to $S$ to obtain $\tilde{S}$ can always be chosen so that every component of $L$ lies on the same side of $S'$.  So we can in fact conclude $t(L)\leq |L\cap S|-\frac{\chi(S)}{2}$.

\section{New examples of tunnel number subadditivity}

We now come to the object of our exercises.\\

\begin{definition}
Let $L$ be a link of $n$ components $L_1, \cdots, L_n$, let $K_1,\cdots , K_n$ be a collection of knots, and let $L\#(K_1, \cdots , K_n)$ be the connect sum taken so that $K_i$ connects along $L_i$.  The {\em degeneration ratio} is then given as follows:

\begin{center}
$d_L(K_1,\cdots, K_n)=\frac{t(L)+t(K_1)+\cdots + t(K_n)-t(L\#(K_1,\cdots, K_n))}{t(L)+t(K_1)+\cdots + t(K_n)}$
\end{center}

\end{definition}

The links we find below that admit high degeneration are of the following kind:\\


\begin{definition}
A free decomposing surface $S$ for a link $L$ is {\em optimal} if $t(L)=|L\cap S|-\frac{\chi(S)}{2}$.\\
\end{definition}


\begin{proposition}
If an $n$-component link $L$ admits an optimal free decomposing surface $S$, then there exists a collection of knots $K_1, \cdots ,K_n$ such that 

\begin{center}
$d_L(K_1, \cdots, K_n)\geq \frac {|S\cap L|}{3|S\cap L|-\chi(S)}$\\

\end{center}

\end{proposition}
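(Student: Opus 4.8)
The plan is to run the machinery of Lemma 3.5 together with the Euler-characteristic computation inside Proposition 3.6, choosing each $K_i$ so that its tunnel number is as large as the bridge-number constraint coming from $S$ permits. Set $m=|L\cap S|$, $m_i=|L_i\cap S|$ (each $m_i$ is even, with $\sum_i m_i=m$) and $\chi=\chi(S)$; optimality of $S$ means $t(L)=m-\chi/2$.

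First I would replace $S$ by a surface meeting the hypothesis of Lemma 3.5. Exactly as in the proof of Proposition 3.6, for each $i$ pick a small subarc $\gamma_i$ of $L_i\setminus S$ and adjoin the sphere $S_i=\partial N(\gamma_i)$ to $S$; the resulting $\tilde S$ is still a free decomposing surface (the new ball $(N(\gamma_i),\gamma_i)$ is a trivial tangle, and the piece it is cut from has unchanged exterior), and now every $L_i$ contains an arc, namely $\gamma_i$, cobounding a disk with an arc of $\partial N(\gamma_i)$. Here $|L_i\cap\tilde S|=m_i+2$, while $\chi(\tilde S\setminus L)=\chi(S\setminus L)$ because the two new punctures on each $S_i$ cancel its contribution to the Euler characteristic. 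I would also arrange, as in Proposition 3.6, that all components of $L$ wind up on one side of the Heegaard surface produced below.

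Next, for each $i$ I would take $K_i$ to be a knot with $t(K_i)=m_i/2$ and $b(K_i)=m_i/2+1$; equivalently $t(K_i)=b(K_i)-1$, so $K_i$ is extremal for the universal bound $t\le b-1$. Such knots exist for every value $m_i/2\ge 1$ — for instance, a knot carried by a bridge sphere of sufficiently high distance has tunnel number one less than its bridge number — and this is the one ingredient not already supplied by the paper. Each such $K_i$ has a minimal bridge sphere $F_i$ with $|K_i\cap F_i|=2b(K_i)=m_i+2=|L_i\cap\tilde S|$, so Lemma 3.5 applies to $\tilde S$ together with the $F_i$ and yields the connect sum $L'=L\#(K_1,\dots,K_n)$ (with $K_i$ glued along $L_i$) and a closed connected Heegaard surface $S'$ for $E(L')$. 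Since all components of $L'$ lie on one side of $S'$, the reasoning closing Proposition 3.6 gives $t(L')\le g(S')-1$.

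Finally I would compute, using the Euler-characteristic formula in Lemma 3.5,
$$\chi(S')=\chi(\tilde S\setminus L)+\sum_i\chi(F_i\setminus K_i)=(\chi-m)+\sum_i\bigl(2-(m_i+2)\bigr)=\chi-2m,$$
whence $g(S')=1+m-\chi/2$ and $t(L')\le m-\chi/2$. Plugging this, together with $t(L)=m-\chi/2$ and $\sum_i t(K_i)=\sum_i m_i/2=m/2$, into Definition 4.1 gives
$$d_L(K_1,\dots,K_n)=1-\frac{t(L')}{t(L)+\sum_i t(K_i)}\ge 1-\frac{m-\chi/2}{(m-\chi/2)+m/2}=\frac{m}{3m-\chi},$$
since the middle expression is decreasing in the upper bound for $t(L')$. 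The hard part is the choice of the $K_i$: the chain $t(K_i)\le b(K_i)-1\le m_i/2$ forces each inequality to be an equality in order for the numerator to come out to exactly $m/2$, so the estimate genuinely needs knots realizing $t=b-1$ in every degree. Everything else is bookkeeping built on Lemma 3.5 and Proposition 3.6.
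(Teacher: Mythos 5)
Your argument is correct and follows the paper's proof essentially verbatim: modify $S$ to $\tilde S$ as in Proposition 3.6, choose each $K_i$ to be an $(m_i/2+1)$-bridge knot with $t(K_i)=m_i/2$, apply Lemma 3.5 and the Euler characteristic computation to get $t(L\#(K_1,\dots,K_n))\leq m-\chi/2$, and plug into Definition 4.1; your arithmetic matches the stated bound. The one place you diverge is the existence of knots realizing $t(K)=b(K)-1$ in every bridge number: you assert this via bridge spheres of sufficiently high distance, which is true but rests on a nontrivial external theorem (Tomova-type distance bounds) that you neither prove nor cite, whereas the paper gets it elementarily by taking $K_i$ to be a connect sum of $m_i/2$ two-bridge knots and combining Schubert's bridge number additivity with the Scharlemann--Schultens bound $t(K_1\#\cdots\#K_m)\geq m$ and the trivial inequality $t<b$. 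The paper's route is preferable here because it is self-contained given the references already in play; your route would need a precise citation to close the gap, though it does exhibit a larger class of admissible $K_i$.
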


\begin{proof}
Follow the proof of Proposition 3.6 exactly, except let your $K_i$ be $|L_i\cap \tilde{S}|/2$ bridge, tunnel number $|L_i\cap \tilde{S}|/2-1$ knots in $S^3$, where $\tilde{S}$ is the same modified surface described there.  That such knots do exist can be seen as follows.  First, it is a well-known result of Schubert that $b(K_1\# \cdots \# K_m)= b(K_1)+\cdots + b(K_m)-m+1$ for any collection $m$ knots.  Second, by a result of Scharlemann and Schultens \cite{schulschar2}, $t(K_1\# \cdots \# K_m)\geq m$ for any collection of $m$ knots.  Finally, it is easy to see that $t(K)<b(K)$ for any knot $K$.  It follows that any $n-1$-fold connect sum of $2$-bridge knots is an $n$-bridge, tunnel number $n-1$ knot.\\\\

By hypothesis $t(L)=|L\cap S|-\frac{\chi(S)}{2}$ and $t(K_i)=|L_i\cap S|/2$, while the remark at the end of Proposition 3.6 shows that $t(L\#(K_1, \cdots, K_n)\leq |L\cap S|-\frac{\chi(S)}{2}$.  Plugging in to the formula of Definition 4.1 yields the result.\\

\end{proof}

To construct our family of optimal links, we will need the following (weakened version) of an important theorem of Kobayashi and Rieck.\\


\begin{theorem}\cite{kobri}
For any collection of positive integers $\{m_1,\cdots , m_n\}$ there exists a collection of knots $\{ K_1, \cdots , K_n\}$ in $S^3$ satisfying $t(K_i)=m_i$ and $t(K_1 \# \cdots \# K_n)= n-1+\displaystyle\sum\limits_i m_i$.\\
\end{theorem}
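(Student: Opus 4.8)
I would prove Theorem 4.4 by recasting it as an \emph{additivity of Heegaard genus} statement for knot exteriors and then choosing the $K_i$ so that their exteriors carry high distance Heegaard splittings. Write $N_i=E(K_i)$ and $N=E(K_1\#\cdots\#K_n)$, and recall the standard fact that $t(K)=H(E(K))-1$ for any knot $K$: an unknotting system of $m$ arcs for $K$ produces a genus $m+1$ Heegaard splitting of $E(K)$, and conversely a genus $g$ splitting yields an unknotting system of $g-1$ arcs. Hence, once we require $t(K_i)=m_i$, i.e. $H(N_i)=m_i+1$, the conclusion $t(K_1\#\cdots\#K_n)=n-1+\sum_i m_i$ is equivalent to the equality $H(N)=\sum_{i=1}^n H(N_i)$. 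The manifold $N$ is obtained from $N_1\sqcup\cdots\sqcup N_n$ by gluing along the $n-1$ essential annuli $A_1,\ldots,A_{n-1}$ with meridional boundary cut out of $N$ by the connect sum spheres, and cutting $N$ along these annuli returns the pieces $N_i$. The inequality $H(N)\le\sum_i H(N_i)$ is standard, being the iterate of the familiar bound $t(K_1\#K_2)\le t(K_1)+t(K_2)+1$; all of the work lies in the reverse inequality.

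For the reverse inequality I would take each $K_i$ from a family of knots whose exterior $N_i$ admits a Heegaard splitting realizing its minimal genus $m_i+1$ and of Hempel distance exceeding $2\sum_j(m_j+1)$ --- in any case comfortably larger than $2-\chi(S)$ for every essential surface $S$ that could arise below. Knots with arbitrarily large distance splittings of prescribed genus are known to exist (one can build them from high distance curves on a fixed Heegaard surface in the spirit of Minsky--Moriah--Schleimer, or by iterated Dehn twist constructions), and tuning the genus to be exactly $m_i+1$ is a routine additional bookkeeping step; assembling this existence statement precisely would be the first real task. The payoff of the large distance is Hartshorn's theorem and its extensions: if $N_i$ has a Heegaard splitting of distance $d$, then every two-sided essential surface $S\subset N_i$ satisfies $2-\chi(S)\ge d$, so for $d$ large, $N_i$ contains no essential surface of small genus.

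Now let $\Sigma$ be any Heegaard surface for $N$; we must show $g(\Sigma)\ge\sum_i H(N_i)$. Isotope $\Sigma$ so that it meets $\mathcal{A}=\{A_1,\ldots,A_{n-1}\}$ only in curves essential on each $A_j$ (using incompressibility of the $A_j$ and a Haken-style innermost argument), and run a Scharlemann--Thompson untelescoping of $\Sigma$ relative to $\mathcal{A}$. One of two things happens. Either the resulting generalized Heegaard splitting of $N$ restricts on each $N_i$ to a generalized Heegaard splitting of $(N_i,\partial N_i)$, and then amalgamating and comparing Euler characteristics across the cut, via $\chi(\Sigma)=\sum_i\chi(\Sigma\cap N_i)$ and the minimality of $H(N_i)$, gives $g(\Sigma)\ge\sum_i H(N_i)$ directly; or some thick or thin piece of the untelescoping is a two-sided essential surface contained in a single $N_i$, of genus at most $g(\Sigma)$, and then if $g(\Sigma)<\sum_i H(N_i)$ this contradicts Hartshorn's theorem, since the distance of the splitting of $N_i$ was chosen to exceed $2\sum_j(m_j+1)$. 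In either case $g(\Sigma)\ge\sum_i H(N_i)$, and together with the easy inequality this gives $H(N)=\sum_i H(N_i)$, which is the theorem.

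The main obstacle is precisely this middle step: an arbitrary Heegaard surface of the glued manifold $N$ need not respect the annular decomposition, and since $N$ itself contains essential annuli (and tori) it has no high distance splitting of its own, so rigidity cannot be read off from $N$ directly. One must transfer the rigidity of the pieces $N_i$ to $N$, which is tantamount to establishing, for this annular gluing, an additivity-of-Heegaard-genus theorem in the spirit of the Gordon conjecture for connected sums. This is the technical core and is not to be expected routine; it is what Kobayashi and Rieck carry out in \cite{kobri}.
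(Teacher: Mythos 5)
First, note that the paper does not actually prove this statement: Theorem 4.4 is imported (in weakened form) from Kobayashi--Rieck \cite{kobri} and used as a black box, so there is no in-paper argument to compare yours against. Your opening reduction is correct and is indeed how the result should be understood: since $t(K)=H(E(K))-1$, the conclusion is equivalent to additivity of Heegaard genus, $H(E(K_1\#\cdots\#K_n))=\sum_i H(E(K_i))$, for the exteriors glued along the meridional annuli cut out by the decomposing spheres, and the inequality $\leq$ is elementary.

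However, your argument has a genuine gap exactly where you flag one, and it is the whole theorem. The dichotomy you assert for the untelescoped splitting --- either it restricts to generalized splittings of each $N_i$, and Euler characteristics add up, or some thick/thin surface is an essential surface contained in a single $N_i$ --- does not follow from isotoping $\Sigma$ to meet the annuli in essential curves and then untelescoping. Thin surfaces of an untelescoping are essential in $N$ but may run through every $A_j$; the pieces $\Sigma\cap N_i$ need not assemble into generalized splittings of the $N_i$; and taming this interaction is precisely the content of Gordon-conjecture-type genus-additivity theorems. In Kobayashi and Rieck's actual development this step is their separate theorem that Heegaard genus is additive under connected sum of m-small knots (knots whose exteriors contain no essential meridional surface), whose proof occupies its own paper; the statement at hand then follows by exhibiting m-small knots of each prescribed tunnel number. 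Your high-distance route is viable in outline --- Hartshorn/Scharlemann--Tomova-type bounds do force high-distance knots to be m-small and force the genus-$(m_i+1)$ splitting to be minimal, and high-distance knots of prescribed tunnel number are known to exist --- but it still funnels through the same unproved gluing rigidity. Since you explicitly defer that step to \cite{kobri}, what you have is a correct reduction and a plausible plan, not a proof.
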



\begin{theorem}
For all integers $n>0$, there exist $n+1$ component links $L$ and knots $K_1,\cdots, K_{n+1}$ in $S^3$ such that

\begin{center}
$d_L(K_1,\cdots , K_{n+1})\geq \frac{3n-1}{7n-2}$\\
\end{center}

\end{theorem}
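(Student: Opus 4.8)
The plan is to build, for each $n>0$, an $(n+1)$-component link $L$ equipped with an \emph{optimal} free decomposing surface $S$, and then apply Proposition 4.3 to produce the desired knots $K_1,\dots,K_{n+1}$ and degeneration ratio. The target ratio $\frac{3n-1}{7n-2}$ must match the formula $\frac{|S\cap L|}{3|S\cap L|-\chi(S)}$ from Proposition 4.3, so I would first reverse-engineer the intersection data: setting $|S\cap L|=2(3n-1)$ and $-\chi(S)=2(2n-1)$ makes $\frac{|S\cap L|}{3|S\cap L|-\chi(S)}=\frac{2(3n-1)}{6(3n-1)+2(2n-1)}=\frac{3n-1}{7n-2}$ (recall that if $-\chi(S)=2a$ then $S$ has genus $a/2+1$ when connected, etc.). So I want a link whose tunnel number equals $|S\cap L|-\tfrac{\chi(S)}{2}=(3n-1)+(2n-1)=5n-2$, realized by a free decomposing surface with these Euler characteristic and intersection numbers.

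The construction itself should be a ``doubled'' version of Lemma 3.5 run in reverse: I would take the Kobayashi--Rieck knots $\{K_1',\dots,K_n'\}$ from Theorem 4.4 (with the $m_i$ chosen appropriately, likely all equal to a fixed small value so that $t(K_1'\#\cdots\#K_n')=n-1+\sum m_i$ is large), together with their Heegaard splittings, and then \emph{glue them into a link} $L$ along a free decomposing surface $S$ so that the complementary pieces are the handlebodies coming from the $K_i'$ together with some trivial-tangle pieces. The key point is that Lemma 3.5 (and Proposition 3.6) tells us how free decompositions behave under connect sum; here I would run the bookkeeping to see that the link $L$ so constructed has a free decomposing surface $S$ with the prescribed $|S\cap L|$ and $\chi(S)$, giving the \emph{upper} bound $t(L)\leq 5n-2$ via the remark after Proposition 3.6.

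The genuinely hard step is the matching \emph{lower} bound $t(L)\geq 5n-2$, which is what makes $S$ optimal. This is where Theorem 4.4 does its real work: the superadditivity of the Kobayashi--Rieck knots must propagate to a lower bound on the Heegaard genus of $E(L)$. I would argue that any unknotting system for $L$ of small genus would, after cutting along appropriate spheres or compressing disks, yield Heegaard splittings of the $E(K_i')$ and of $E(K_1'\#\cdots\#K_n')$ that violate the tunnel-number equality $t(K_1'\#\cdots\#K_n')=n-1+\sum t(K_i')$ of Theorem 4.4 — essentially a destabilization/Haken-sphere argument showing that a low-genus splitting of the glued-up manifold would split along the connect-sum spheres into low-genus pieces, contradicting superadditivity. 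The precise mechanism here is the technical heart of the paper; everything else (the Euler characteristic arithmetic, the verification that the surface is free decomposing, the final substitution into Definition 4.1) is routine.

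Once $S$ is shown optimal, Proposition 4.3 immediately supplies knots $K_1,\dots,K_{n+1}$ with
\[
d_L(K_1,\dots,K_{n+1})\geq \frac{|S\cap L|}{3|S\cap L|-\chi(S)}=\frac{3n-1}{7n-2},
\]
completing the proof. I would close by noting that as $n\to\infty$ this ratio tends to $3/7$, which is the asymptotic claim advertised in the introduction.
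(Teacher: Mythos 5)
There is a genuine gap here: the construction of the link $L$, which is the entire content of the paper's proof, never actually appears in your write-up, and the ``reverse-engineering'' that is supposed to guide it contains an arithmetic error that sends you toward the wrong kind of surface. You set $-\chi(S)=2(2n-1)$, i.e.\ $\chi(S)<0$, but then $3|S\cap L|-\chi(S)=6(3n-1)+2(2n-1)=22n-8=2(11n-4)$, not $2(7n-2)$, so your target ratio comes out as $\frac{3n-1}{11n-4}$. To get $\frac{3n-1}{7n-2}$ you need $\chi(S)=+(4n-2)$ — and indeed in the paper $S$ is a union of $2n-1$ \emph{spheres}, namely the nested decomposing spheres of a connect sum $J=J_1\#\cdots\#J_{2n}$ of tunnel-number-one knots chosen via Theorem 4.4 so that $t(J)=4n-1$. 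The link $L$ is $J$ together with $n$ extra components, each obtained by sliding the unknotting tunnels of $J_{2i-1}$ and $J_{2i}$ so that they join across a decomposing sphere into a closed loop. Then $|L\cap S|=(4n-2)+2n=6n-2$ and $t(L)=|L\cap S|-\chi(S)/2=4n-1$ (not $5n-2$; your $(3n-1)+(2n-1)$ uses half of $|S\cap L|$ where the full count is needed).

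You also misplace the difficulty. The matching lower bound is not proved by a destabilization or Haken-sphere argument; it is immediate from the way $L$ is built. Since the superadditive connect sum $J$ is literally one of the components of $L$, one has $t(L)\geq t(J)=4n-1$ directly from Theorem 4.4, while Proposition 3.6 applied to $S$ gives $t(L)\leq 6n-2-(2n-1)=4n-1$. The whole point of the construction is to package the hard superadditivity result of Kobayashi--Rieck inside the link so that optimality of $S$ falls out of a one-line comparison; no new genus lower bound is proved. Without the explicit link (the paired-tunnel loops) and with the sign of $\chi(S)$ reversed, the proposal does not yet constitute a proof, although the final step — feeding an optimal surface into Proposition 4.3 — is the same as in the paper.
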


\begin{proof}
Let $J_1,\cdots , J_{2n}$ be a collection of knots in $S^3$ satisfying $t(J_i)=1$ for all $i$ and $t(J_1\# \cdots \# J_{2n})=4n-1$, which exist by Theorem 4.4.  Let $S=S_1\cup \cdots \cup S_{2n-1}$ be a collection of decomposing spheres for the connect sum $J=J_1\# \cdots \# J_{2n}$ satisfying the property that, for each $i$, $S_i$ bounds a ball $B$ satisfying $B\cap S=S_1\cup\cdots \cup S_{i-1}$, i.e. let the $S_i$ be nested as in Figure 2. $W_1, \cdots , W_{2n}$ be the components of $E(S\cup J)$, labeled so that $E(J_i)\cong W_i$ and $W_i\cap W_{i+1}=S_i$. \\

\begin{figure}
\begin{center}
\includegraphics[width=0.8\textwidth]{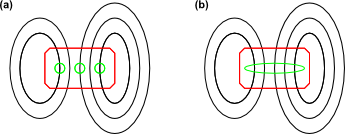}
\end{center}
\caption{Schematic diagram depicting the links constructed in Theorems 4.5 and 4.6.  The decomposing spheres are indicated in black, the component of $L$ coming from the connect sum of superadditive knots is in red, and the link components arising from sliding tunnels are in green.}
\end{figure}

Since $t(J_i)=1$, each $W_i$ admits an arc $t_i$ such that $(W_i,t_i)$ is free.  Moreover, if one marks a pair of points $a_i,b_i$ on the surface of $S_i\setminus N(J)$ for each odd $i$, then the arcs $t_i$ and $t_{i+1}$ can be properly isotoped in $W_i$ and $W_{i+1}$, respectively, so that their endpoints lie on these marked points.  The result will be that the union $t_i\cup t_{i+1}=L_{(i+1)/2}$, forms a closed loop in $S^3$ for each $i\equiv 1 \pmod {2}$, see Figure 2(a).  Let $L$ be the link $J\cup L_1 \cup \cdots \cup L_n$.  Clearly $t(L)\geq t(J)$, since any Heegaard splitting for $L$ is also a Heegaard splitting for $K$, and by Proposition 3.6, $t(L)\leq t(J)$ as well, since $S$ is a free decomposing surface for $L$, in fact an optimal one.  The inequality now follows from Proposition 4.3.\\

\end{proof}

\begin{theorem}
For all integers $n>0$ there are two component links $L\subset S^3$ with $t(L)=3n$ and pairs of knots $K_1,K_2\subset S^3$ such that $d_L(K_1,K_2)\geq 2/5$.\\
\end{theorem}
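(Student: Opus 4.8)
The plan is to run the construction in the proof of Theorem 4.5 ``with the components merged'': rather than $2n$ tunnel-number-one summands whose tunnels are grouped into $n$ disjoint loops, I will use $n+1$ summands and splice all of their tunnels into a single loop, so that the resulting link has exactly two components.

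First I would apply Theorem 4.4 to produce knots $J_1,\dots,J_{n+1}\subset S^3$ with $t(J_1)=t(J_{n+1})=1$, $t(J_i)=2$ for $2\le i\le n$, and $t(J)=3n$ where $J=J_1\#\cdots\#J_{n+1}$; these are compatible since $3n=(n+1)-1+\sum_i t(J_i)$. Let $S=S_1\cup\cdots\cup S_n$ be the nested connect-sum spheres and $W_1,\dots,W_{n+1}$ the components of $E(S\cup J)$, labelled as in Theorem 4.5 so that $W_i\cong E(J_i)$, $W_i$ meets $S_{i-1}$ and $S_i$ (with $W_1$ meeting only $S_1$ and $W_{n+1}$ only $S_n$), and $|J\cap S_i|=2$ for every $i$. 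Inside $W_i$ I place an unknotting system $\tau_i$ for $J_i$ consisting of $t(J_i)$ arcs, with the endpoints isotoped onto the sphere-pieces of $\partial W_i$ in the following pattern: $\tau_1$ and $\tau_{n+1}$ are single arcs with both endpoints on $S_1$, resp.\ $S_n$; and for $2\le i\le n$, $\tau_i$ consists of two arcs, each running from the $S_{i-1}$-piece of $\partial W_i$ to the $S_i$-piece. Concatenating these cyclically --- outward through $W_1,\dots,W_{n+1}$ along one arc of each middle $\tau_i$, then back inward along the other --- yields a single loop $L_1$ meeting each $S_i$ in exactly two points. Set $L=J\cup L_1$; this is a two-component link.

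Next I would verify that $S$ is an optimal free decomposing surface for $L$. Each complementary piece of $S$ is free: deleting $N(J)$ turns it into $W_i\cong E(J_i)$, and $E(J_i)\setminus N(\tau_i)$ is a handlebody because $\tau_i$ is an unknotting system for $J_i$. Since $|L\cap S|=|J\cap S|+|L_1\cap S|=2n+2n=4n$ and $\chi(S)=2n$, the remark following Proposition 3.6 gives $t(L)\le |L\cap S|-\frac{\chi(S)}{2}=3n$. For the reverse inequality, $E(J)$ is recovered from $E(L)=E(J\cup L_1)$ by Dehn-filling the torus $\partial N(L_1)$ along the meridian of $L_1$; since Heegaard genus is non-increasing under Dehn filling and $H(E(X))=t(X)+1$ for every link $X$, we get $t(J)\le t(L)$, so $t(L)\ge t(J)=3n$ (this is the inequality used implicitly in the proof of Theorem 4.5). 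Hence $t(L)=3n$ and $S$ is optimal.

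Finally, Proposition 4.3 applies directly: there exist knots $K_1,K_2$ --- for instance $n$-fold connect sums of $2$-bridge knots, which are $(n+1)$-bridge with tunnel number $n$ --- with
\[
d_L(K_1,K_2)\ \ge\ \frac{|S\cap L|}{3|S\cap L|-\chi(S)}\ =\ \frac{4n}{12n-2n}\ =\ \frac{2}{5},
\]
which proves the claim. The step I expect to require the most care is the placement of the tunnels in the middle regions: one must check that an arbitrary two-arc unknotting system for $J_i$ can be isotoped, without destroying the handlebody complement, so that its two tunnels run straight through $W_i$ from the $S_{i-1}$-piece to the $S_i$-piece of the torus $\partial E(J_i)$, and that these choices can be made compatibly for all $i$ so that the arcs actually close up into a single loop with the stated intersection numbers. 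Everything else is either bookkeeping or a direct appeal to the results above.
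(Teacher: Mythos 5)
Your proposal is correct and follows essentially the same route as the paper: the paper's proof of this theorem is exactly the construction of Theorem 4.5 run with $J=J_1\#\cdots\#J_{n+1}$, $t(J_1)=t(J_{n+1})=1$, $t(J_i)=2$ otherwise, $t(J)=3n$, and the tunnels slid together across the nested spheres into a single loop, followed by the appeal to Propositions 3.6 and 4.3. You have merely filled in the arithmetic ($|L\cap S|=4n$, $\chi(S)=2n$) and the lower bound $t(L)\geq t(J)$ via Dehn filling, which the paper leaves implicit.
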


\begin{proof}
The construction is nearly identical to that of Theorem 4.5, except we start by taking the connect sum $J=J_1\# \cdots \# J_{n+1}$ with $t(J_1)=t(J_{n+1})=1$, $t(J_i)=2$ for $1<i<n+1$, and $t(J)=3n$.  The difference is that after decomposing the connect sum along nested spheres as in Theorem 4.5, the tunnels in $W_i$ can be slid together to form a single loop instead of many, as in Figure 2(b).\\

\end{proof}

Let $L$ be a link with free decomposing surface $S$.  Then we may regard $N(S)\cap L$ as a collection of trivial braids in $N(S)\cong S\times I$ (one for each component of $S\times I$).  Substituting an arbitrary collection of braids for $N(S)\cap L$ yields another link for which $S$ is also a free decomposing surface, see Figure 3 for an example.\\

\begin{figure}
\begin{center}
\includegraphics[width=0.8\textwidth]{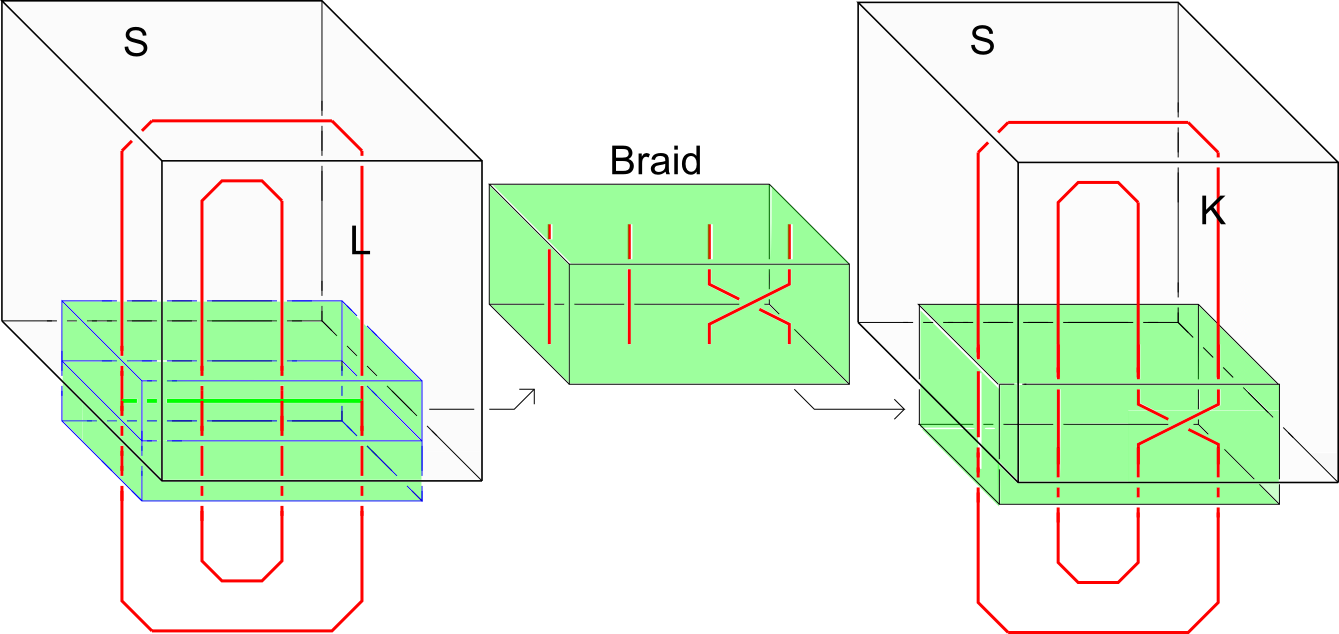}
\end{center}
\caption{Braid Substitution}
\end{figure}

\begin{conjecture}

There exist knots $K$ obtained from the links of Propositions 4.5 and 4.6 by braid substitutions which are optimal.\\

\end{conjecture}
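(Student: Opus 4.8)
The plan is to run the proofs of Theorems 4.5 and 4.6 verbatim and to isolate the one inequality that genuinely needs new input. Fix one of those links $L$, with its optimal nested-sphere free decomposing surface $S=S_1\cup\cdots\cup S_m$ and its superadditive summand $J=J_1\#\cdots\#J_{2n}$ (respectively $J_1\#\cdots\#J_{n+1}$). A braid substitution replaces the trivial braids $N(S)\cap L$ in $N(S)\cong S\times I$ by a collection of braids $\{\beta_i\}$, one for each $S_i$; since each $\beta_i$ has the same number of strands as the trivial braid it replaces, $S$ is again a free decomposing surface (by the observation preceding the conjecture) and $|K\cap S|$, $\chi(S)$ are unchanged, where $K$ is the resulting link. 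The link type of $K$ depends only on the permutations $\bar\beta_i$, and it is elementary to choose them so that $K$ is a single knot (for the links of Theorem 4.5, for instance, one may braid at $S_1,S_3,\dots,S_{2n-1}$, at each step merging the growing component with the next loop). For any such choice, Proposition 3.6 and the remark following it give
\[ t(K)\ \le\ |K\cap S|-\frac{\chi(S)}{2}\ =\ |L\cap S|-\frac{\chi(S)}{2}\ =\ t(L), \]
so $K$ is optimal if and only if $t(K)\ge t(L)$, and this reverse inequality is the entire content of the conjecture; moreover, once it holds, the degeneration estimate of Proposition 4.3 is immediate with the same bound as for $L$.

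The difficulty with the lower bound is that, after the substitution, $K$ no longer contains $J$ as a sublink, so the argument used in Theorems 4.5 and 4.6 — that $t(L)\ge t(J)$ because $J$ is a sublink of $L$ — is unavailable, and braid substitution is a priori capable of lowering tunnel number in the same way connect sum is. The approach I would take is to perform the substitutions as economically as possible, using for each $\beta_i$ a permutation braid realizing the cyclic permutation needed to merge the components of $L$, with no local knotting and no linking beyond this. One then wants a sweepout argument in the spirit of Scharlemann and Schultens \cite{schulschar}: given a minimal-genus Heegaard surface $P$ of $E(K)$, put $K$ in thin position with respect to the sphere system $\{S_i\}$ and $P$ in a compatible position, and show that cutting $E(K)$ along the $S_i$ — whose complementary pieces are the exteriors $E(J_i)$ with the inserted braiding attached in a collar of their boundary spheres — splits $P$ into Heegaard surfaces of these pieces that re-amalgamate to a Heegaard splitting of $E(J_1\#\cdots\#J_{2n})$ of genus at most $g(P)$. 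Theorem 4.4 would then force $g(P)\ge t(J)+1=t(L)+1$, i.e.\ $t(K)\ge t(L)$. The freeness of the tangles $(W_i,T_i)$ is exactly the feature that should let the braiding collar be absorbed at no cost, so that the pieces behave like $E(J_i)$.

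I expect the decisive obstacle to be precisely this last step: controlling what a minimal-genus Heegaard surface of $E(K)$ can look like relative to the spheres $S_i$ once braiding has been inserted between the summands. A thin-position argument must rule out compressions that exploit the new braiding to amalgamate the free pieces more efficiently than the decomposition predicts, and this is exactly the phenomenon that keeps the statement conjectural. A weaker fall-back would be to bound below the \emph{rank} of $\pi_1(E(K))$ rather than its Heegaard genus, by producing a $\pi_1$-surjection onto $\pi_1(E(J))$ via the map that collapses the braiding region and then fills the meridians of the loop components, and combining this with a version of the Kobayashi--Rieck construction (Theorem 4.4) in which rank and Heegaard genus of the superadditive summand agree; this would settle the conjecture for such summands. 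Finally, the economical substitution must be chosen so as to keep $S$ optimal rather than trivializing the construction, and that this is possible is confirmed at the bottom of the range by Nogueira's knots \cite{nog}, which lie in this family and for which the lower bound $t(K)\ge t(L)$ is already known, making a positive answer very plausible in general.
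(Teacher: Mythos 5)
This statement is Conjecture 4.7; the paper offers no proof of it, only the remark that Nogueira has verified the case $n=1$. Your write-up is therefore not being measured against an argument in the paper, and on its own terms it is not a proof: it is a correct reduction plus a research plan. The reduction is sound and matches the paper's framework exactly --- braid substitution preserves $|{\cdot}\cap S|$ and the freeness of the complementary tangles, so Proposition 3.6 and the remark following it give $t(K)\le |L\cap S|-\chi(S)/2=t(L)$, and optimality of $S$ for $K$ is equivalent to the single inequality $t(K)\ge t(L)$. You are also right that the sublink argument used in Theorems 4.5 and 4.6 ($t(L)\ge t(J)$ because $J\subset L$) evaporates after substitution, since $J$ is no longer a sublink of $K$. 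That is precisely the correct diagnosis of why the statement is open.

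The genuine gap is the step you yourself flag: the thin-position/sweepout argument that would cut a minimal-genus Heegaard surface of $E(K)$ along the spheres $S_i$ and reassemble pieces into a Heegaard splitting of $E(J_1\#\cdots\#J_{2n})$ of no larger genus is asserted as a hope, not carried out, and it is exactly where the difficulty lives. Nothing in the paper's machinery (Propositions 2.6, 2.7, Lemma 3.5) produces lower bounds on tunnel number; every tool here gives upper bounds, so an entirely new ingredient is needed. Your fallback via rank of $\pi_1$ has the same status: the claimed $\pi_1$-surjection onto $\pi_1(E(J))$ obtained by ``collapsing the braiding region and filling meridians'' is not constructed, and it is not clear such a map exists for a nontrivial substitution. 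So the proposal should be read as a correct reformulation of the conjecture together with two plausible attack routes, not as a resolution; the paper's own position --- that Nogueira's $n=1$ examples make a positive answer likely --- is all that is actually established.
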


We conclude by pointing out that Nogueira \cite{nog} has already given an affirmative answer to Conjecture 4.7 in the case $n=1$ of Theorems 4.5 and 4.6 (which coincide).

\section{Acknowledgments}

I would like to thank Maggy Tomova and Charlie Frohman for all of their time, help, and advice.

\end{document}